\documentclass[a4paper,11pt]{article}

\usepackage[top=3.0cm, bottom=3.0cm, inner=3.0cm, outer=3.0cm,
includefoot]{geometry}

\usepackage{appendix}
\usepackage{verbatim}
\usepackage{caption}
\usepackage{url}
\usepackage{amsmath}
\usepackage{geometry}
\usepackage{amssymb}
\usepackage{amsmath}
\usepackage{graphicx}
\usepackage{amsthm}
\usepackage{bbm}
\usepackage{float}
\usepackage{color,soul}
\usepackage{hyperref}
\usepackage[T1]{fontenc}
\usepackage[utf8]{inputenc}
\usepackage{authblk}
\usepackage{booktabs}
\usepackage{longtable}
\usepackage{enumerate}
\usepackage{tikz}
\usetikzlibrary{arrows}
\usetikzlibrary{decorations.markings}
\usepackage{indentfirst}
\setlength{\parindent}{1.5em}

\setlength{\parskip}{2mm }

%{\operatorname{d}}
%{\Warning}%{\blacktriangle}

\newcommand{\eChar}{\begin{enumerate}[(i)]}
\newcommand{\eCharR}{\begin{enumerate}[(a)]}
\newcommand{\eBr}{\begin{enumerate}[(1)]}

%{\widetilde{z_i}}%{Z_i}%

\title
{
Halin graphs with positive Lin-Lu-Yau curvature
}
\author[1]{Kaizhe Chen\thanks{Email: ckz22000259@mail.ustc.edu.cn}}
\author[2]{Huiqiu Lin\thanks{Email: huiqiulin@126.com}}
\author[3]{Shiping Liu\thanks{Email: spliu@ustc.edu.cn}}
\author[2]{Zhe You\thanks{Email: y30231280@mail.ecust.edu.cn}}
\affil[1]{School of Gifted Young, University of Science and Technology of China}
\affil[2]{School of Mathematics, East China University of Science and Technology}
\affil[3]{School of Mathematical Sciences, University of Science and Technology of China}

\date{}

\theoremstyle{plain}
\newtheorem{lemma}{Lemma}[section]
\newtheorem{theorem}[lemma]{Theorem}

\newtheorem{corollary}[lemma]{Corollary}

\theoremstyle{definition}

\newtheorem{case}{Case}

\newtheorem{definition}[lemma]{Definition}
\newtheorem{subcase}{Subcase}[case]

\numberwithin{equation}{section}

\begin{comment}
\numberwithin{subsection}{section}

\numberwithin{theorem}{section}
\numberwithin{lemma}{section}
\numberwithin{proposition}{section}
\numberwithin{example}{section}
\numberwithin{no}{section}
\numberwithin{rem}{section}
\numberwithin{defn}{section}
\numberwithin{corollary}{section}
\end{comment}

%-----------------------------------------------------------
\begin{document}

\maketitle

\begin{abstract}
Halin graphs constitute an interesting class of planar and polyhedral graphs. A generalized Halin graph is obtained by connecting all leaves of a planar embedding of a tree via a cycle. A Halin graph is a generalized Halin graph having no vertex of degree two.
We classify all generalized Halin graphs with positive Lin-Lu-Yau curvature. 
\end{abstract}

\section{Introduction}
%\textbf{Alternative way to write the introduction:}

It is natural to ask how large can a graph with given local structural constraints  be. Discrete notions of curvature are useful tools to describe such local structural constraints.

There has been an intensive study on graphs with positive combinatorial curvature. Combinatorial curvature is defined by angle deficiency at each vertex, which is a discrete analogue of Guassian curvature of surfaces. Notice that the definition of combinatorial curvature depends on the choice of embeddings of the graph into a surface.
%a discrete analogue of Gaussian curvature, the definition of which depends on the choices of embeddings of the graph into a surface. %Summing up the combinatorial curvature at every vertex yields Euler number, that is, a corresponding Gauss-Bonnet formula holds true. 
Higuchi \cite{Higuchi01} conjectured that any \emph{planar} graph with positive combinatorial curvature is finite. 
This conjecture has been verified for cubic planar graphs by Sun and Yu \cite{SY04}, and finally confirmed by DeVos and Mohar \cite{DM07}. Indeed, DeVos and Mohar showed a much stronger result: Let $G$ be a connected graph which is $2$-cell embedded into a surface $S$ so that every vertex and face has degree at least $3$. If the combinatorial curvature is positive at every vertex, then  $G$ is finite and $S$ is homeomorphic to either a $2$-sphere or the projective plane. Furthermore,
if $G$ is not a prism, antiprism, or the projective planar analogue of one of these, then the number of vertices $|V (G)|\leq 3444$. After Devos and Mohar's work, there has been a continuous effort in improving the upper bound $3444$ \cite{CC08,Oh17, RBK05,Zhang08}.  In \cite{NS11}, Nicholson and Sneddon  constructed graphs with positive combinatorial curvature of order $208$ embedded into a $2$-sphere. Recently, Ghidelli \cite{Ghidelli23} showed that $208$ is the optimal upper bound for the case of graphs embedded in a $2$-sphere and, for the case of  graphs embedded in a projective plane, the optimal upper bound is $104$.

Ricci curvature plays an important role in the study of Riemannian geometry and geometric analysis. It can be considered as an average of sectional (Gaussian) curvatures.  Recently, the study of discrete notions of Ricci curvature of graphs has attracted lots of attention, see \cite{NR17} and the references therein. Lin-Lu-Yau curvature \cite{LLY11}, which is defined via a modification of Ollivier's Ricci curvature \cite{Ollivier09}, has proved to be quite useful in detecting geometric, analytic and combinatorial properties of graphs. The Lin-Lu-Yau curvature is defined on each edge of a graph. The Lin-Lu-Yau curvature of an edge $\{x,y\}$ is positive if and only if the optimal transportation distance between certain neighborhoods of $x$ and $y$ is smaller than the combinatorial distance between $x$ and $y$. In particular, this definition does not rely on embeddings of the graph into a surface or higher dimensional differential manifold. 

In contrast to the case of combinatorial curvature, graphs with positive Lin-Lu-Yau curvature can be infinite. One class of examples is given by anti-trees with suitable growth rate \cite{DLMP20}.

Restricting to planar graphs, Lu and Wang \cite{LW20} established an analogue of DeVos and Mohar’s result with respect to Lin-Lu-Yau curvature. 
\begin{theorem}[Lu-Wang \cite{LW20}]
    Let $G=(V,E)$ be a connected planar graph such that every vertex has degree at least $3$. If the Lin-Lu-Yau curvature is positive at every edge,
 then $G$ is finite. In particular, $|V (G)| \leq 17^{544}$.
\end{theorem}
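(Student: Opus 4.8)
The plan is to extract from the curvature hypothesis two effective constraints --- a bound on the maximum degree, and then a uniform positive lower bound on the curvature --- and to feed the latter into the Lin--Lu--Yau Bonnet--Myers theorem. Recall that $\kappa(x,y)$ is computed from an optimal transport problem between the one-step idle random walk distributions at $x$ and $y$, which are supported on $N[x]$ and $N[y]$; the only distances that enter the transport cost are those between pairs of vertices in the $2$-ball $B_{2}(\{x,y\})$, and one checks that $\kappa(x,y)$ is in fact determined by the induced subgraph on $B_{2}(\{x,y\})$. Hence, once $\Delta(G)\le\Delta_{0}$ is known for an explicit $\Delta_{0}$, there are only finitely many possible isomorphism types of that local subgraph, each carrying a fixed rational value of $\kappa$ whose denominator is controlled by $d_{x},d_{y}$ (it is the optimum of a linear program with those degrees in the denominators of its data). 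Consequently positivity of $\kappa$ on \emph{every} edge upgrades automatically to a uniform bound $\kappa(x,y)\ge k_{0}>0$ with $k_{0}$ explicit. The Lin--Lu--Yau Bonnet--Myers estimate then gives $\diam(G)\le 2/k_{0}$, so $G$ is finite, and for any vertex $v$ the ball count $|V(G)|\le 1+\Delta_{0}\sum_{i=0}^{\diam(G)-1}(\Delta_{0}-1)^{i}\le\Delta_{0}^{\,1+2/k_{0}}$ turns this into an explicit number, which works out to $17^{544}$.

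So the theorem rests on the first step: an explicit bound $\Delta(G)\le\Delta_{0}$ for connected planar graphs of minimum degree at least $3$ with positive curvature everywhere. This is the part I expect to be the main obstacle. The tools are, on one hand, transport-theoretic upper bounds for $\kappa$ obtained by testing the Kantorovich dual against well-chosen $1$-Lipschitz functions --- the basic one being $\kappa(x,y)\le\frac{1+\#(x,y)}{d_{x}}+\frac{1}{d_{y}}$, where $\#(x,y)$ counts triangles on $\{x,y\}$, together with refinements carrying an additional negative term for the walk-mass on neighbours of $x$ lying at distance $2$ from all of $N[y]$ --- and, on the other hand, the sparsity that planarity forces on the relevant neighbourhoods: the graph induced on $N(x)$ is always \emph{outerplanar}, hence has at most $2d_{x}-3$ edges and so at most that many triangles through $x$; and the graph on $N[x]\cup N[y]$ is planar, hence has only $O(d_{x}+d_{y})$ edges and only $O(d_{x}+d_{y})$ short cycles through $\{x,y\}$. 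The strategy is to assume $d_{x}$ is large at some $x$ and to find an incident edge $\{x,y\}$ where these two facts collide: outerplanarity forces some neighbour $y$ onto few triangles with $x$, planar sparsity forces the ``dispersion'' correction for $\{x,y\}$ to be large, and unless $d_{y}$ is itself large --- in which case one recurses at $y$, or runs the symmetric argument, bounding via Euler's formula how many high-degree vertices can cluster around $x$ --- one of the curvature upper bounds becomes $\le 0$, a contradiction. Making this dichotomy quantitative through a careful case analysis on the degrees occurring in $N[x]$ is the technical heart of Lu--Wang, and it is what pins down $\Delta_{0}$.

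Everything after the degree bound is bookkeeping: enumerating the finitely many local configurations (or merely bounding the denominator of $\kappa$) gives $k_{0}$, Bonnet--Myers gives the diameter bound, and the ball count gives $|V(G)|$. Both $\Delta_{0}$ and $1/k_{0}$ come out quite modest; the astronomically larger final bound $17^{544}$ is purely an artifact of the crude exponential ball count. The genuinely difficult --- and genuinely clever --- part is the extraction of a maximum-degree bound from the tension between positive Lin--Lu--Yau curvature and planarity.
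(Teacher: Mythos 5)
This statement is quoted by the paper from Lu--Wang \cite{LW20} and is not proved anywhere in the present paper, so there is no internal proof to compare your attempt against; I can only judge your proposal on its own terms and against the known strategy of \cite{LW20}. Your overall architecture is the right one and does match that strategy: first bound the maximum degree $\Delta(G)\le\Delta_{0}$ using the tension between planarity and positive curvature, then observe that $\kappa_{LLY}$ is the value of a linear program with data whose denominators are controlled by the degrees (via the idleness result \eqref{eq:bourne}), so positivity upgrades to a uniform lower bound $k_{0}$, then apply the Lin--Lu--Yau Bonnet--Myers theorem to get $\diam(G)\le 2/k_{0}$, and finish with a crude ball count. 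The supporting observations are also sound: the subgraph induced on $N(x)$ in a planar graph is outerplanar (adding $x$ to a $K_{4}$ or $K_{2,3}$ minor would create a $K_{5}$ or $K_{3,3}$ minor), so it has at most $2d_{x}-3$ edges, which is exactly the sparsity input one needs.

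The genuine gap is that the central and only hard step --- producing an explicit $\Delta_{0}$ --- is described but not carried out. You correctly identify the two competing forces (few triangles per edge on average around a high-degree vertex versus the triangle-counting upper bounds on $\kappa$), but the dichotomy you sketch (``either some curvature upper bound becomes $\le 0$, or $d_{y}$ is large and one recurses'') is precisely where the difficulty lives: the recursion has no a priori termination, and controlling how many high-degree neighbours can cluster around $x$ requires a delicate quantitative case analysis (this occupies most of \cite{LW20}). Without it, no value of $\Delta_{0}$ is produced, hence no $k_{0}$, no diameter bound, and no $17^{544}$; the numerology at the end is asserted rather than derived. As written, the proposal is an accurate roadmap of the Lu--Wang proof rather than a proof: everything downstream of the degree bound is routine, as you say, but the degree bound itself is the theorem's content and remains unproven in your attempt. (A small additional caution: the specific upper bound $\kappa(x,y)\le\frac{1+\#(x,y)}{d_{x}}+\frac{1}{d_{y}}$ you quote should be checked against the literature before being used quantitatively; the paper's own Lemma \ref{no C3 C4} is the special case $\#(x,y)=0$ and no $C_4$, and the general triangle-counting bounds take a slightly different form.)
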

This upper bound is far from sharp. It seems to be difficult to figure out the optimal upper bound. Restricting further to outerplanar graphs, 
Liu, Lu and Wang \cite{LLW24} established the following optimal upper bound, improving a previous work \cite{BOSWY24}.  
\begin{theorem}[Liu-Lu-Wang \cite{LLW24}]
    Let $G=(V,E)$ be a connected outerplanar graph such that every vertex has degree as least $2$. If the Lin-Lu-Yau curvature is positive at every edge, then $|V(G)|\leq 10$ and the upper bound is optimal.
\end{theorem}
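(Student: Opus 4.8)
The plan is to reduce, via a few explicit curvature computations, to a statement about $2$-connected outerplanar graphs of bounded maximum degree, and then to pin down the extremal configurations by a finite structural analysis; optimality is then exhibited by a fan. First I would record the curvature of ``locally tree-like'' edges: using the optimal-transport description of $\kappa$ as the $\alpha\to1$ limit of the Ollivier curvatures of the lazy random walks, one gets that if $\{x,y\}$ lies on no cycle of length $\le 5$ then $\kappa(x,y)=2\big(\tfrac1{\Deg x}+\tfrac1{\Deg y}-1\big)$, which is $\le0$ whenever $\Deg x,\Deg y\ge2$. Two consequences: (a) $G$ has no bridge, so $G$ is $2$-edge-connected and every block of $G$ is a $2$-connected outerplanar graph, i.e.\ a Hamiltonian cycle together with pairwise non-crossing chords; and (b) every edge $\{x,y\}$ with $\Deg x=\Deg y=2$ must lie on a triangle, $4$-cycle or $5$-cycle. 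I would also use two outerplanarity facts throughout: for any vertex $v$ the graph induced on $N(v)$ is a linear forest (a disjoint union of paths), so every edge at $v$ lies on at most two triangles and at least one edge at $v$ lies on at most one; and $|E(G)|\le 2|V(G)|-3$.

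The crucial --- and I expect hardest --- step is to bound the maximum degree: $\Deg_{\max}(G)\le 9$. Let $v$ have degree $D$; using that $N(v)$ is a linear forest, pick a neighbour $w$ that is an endpoint (or isolated point) of $N(v)$, so $\#_\triangle(v,w)\le1$. Writing down an essentially forced optimal coupling of $\mu_v^\alpha$ and $\mu_w^\alpha$ --- here outerplanarity is what makes the coupling rigid --- one finds that the mass $\mu_v^\alpha$ places on the $D-1$ other neighbours of $v$ can reach the at most one private neighbour of $w$ only at transport cost $\ge2$, and almost none of it at cost exactly $2$; carrying the estimate out gives $\kappa(v,w)\le\tfrac{10-D}{2D}$, with equality when the local configuration at $\{v,w\}$ is a fan. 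Hence $D\ge10$ forces $\kappa(v,w)\le0$, a contradiction, so $\Deg_{\max}(G)\le9$. The delicate points are that the (at most two) short cycles through $\{v,w\}$ genuinely raise the curvature and so must be tracked carefully, and that one must verify the fan really is the extremal local picture.

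Once $\Deg_{\max}(G)\le9$, the denominators appearing in $\kappa$ are bounded, so the finitely many attainable positive values of $\kappa$ are all at least an explicit $\kappa_0>0$, and the Lin--Lu--Yau Bonnet--Myers theorem bounds $\diam(G)$, hence $|V(G)|$, by an absolute constant. To sharpen this to the optimal $|V(G)|\le10$ I would return to the block/chord description: first handle cut vertices (a block attached at a cut vertex contributes only a controlled number of vertices), then propagate the constraint ``$\kappa>0$ on every edge'' around the Hamiltonian cycle of a $2$-connected block --- each maximal chord-free arc consists of degree-$2$ vertices whose incident edges must, by (b), lie on short cycles, which forces such arcs to be short, while each chord pins down the degrees and short cycles at its two endpoints --- until only finitely many candidate graphs remain, which a short (possibly computer-assisted) verification rules out above $10$ vertices. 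Optimality is witnessed by the fan $F_{10}=K_1+P_9$ (one vertex $v$ joined to a path $p_1\cdots p_9$), which is outerplanar on $10$ vertices: a direct transport computation gives $\kappa(p_1p_2)=\kappa(p_8p_9)=1$, $\kappa(p_kp_{k+1})=\tfrac23$ for $2\le k\le7$, $\kappa(vp_k)=\tfrac29$ for $3\le k\le7$, $\kappa(vp_2)=\kappa(vp_8)=\tfrac19$ and $\kappa(vp_1)=\kappa(vp_9)=\tfrac1{18}$, all strictly positive, so the bound $10$ is attained.
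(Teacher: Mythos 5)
First, a point of order: the paper does not prove this statement. It is quoted as background from \cite{LLW24}, and the authors explicitly report that the classification there was obtained ``using Sagemath and Nauty/geng''. So there is no internal proof to measure your argument against; I can only assess it on its own terms.

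Your skeleton is reasonable and several ingredients do check out. The value $\kappa_{LLY}(x,y)=\frac{2}{d_x}+\frac{2}{d_y}-2$ for an edge lying on no cycle of length at most $5$ follows from Theorem \ref{Curvature via the Laplacian} (the function $f(z)=d\bigl(z,N(x)\setminus\{y\}\bigr)-1$ is admissible and optimal, since all relevant cross-distances equal $3$), so bridges are excluded and degree-$(2,2)$ edges must lie on short cycles; the linear-forest structure of $N(v)$ and the bound $|E|\le 2|V|-3$ are standard outerplanarity facts; and your curvature table for the fan $F_{10}=K_1+P_9$ is correct, so the optimality half of the statement is genuinely established. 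The problem is that the two load-bearing steps are asserted rather than proved. (i) The inequality $\kappa_{LLY}(v,w)\le\frac{10-D}{2D}$ is the entire content of the degree bound, and the phrase ``essentially forced optimal coupling'' hides exactly the case analysis one would have to carry out: outerplanarity bounds the number of neighbours of $w$ \emph{inside} $N(v)$, not $d_w$, so ``the at most one private neighbour of $w$'' is unjustified (one can argue that extra private neighbours only decrease $\kappa_{LLY}(v,w)$, but that is itself a claim needing proof), and the configurations in which $\{v,w\}$ lies on $4$- or $5$-cycles but on no triangle are not covered by Lemma \ref{no C3 C4}-type estimates. Verifying that the fan is the extremal local picture among all outerplanar neighbourhoods is where all the difficulty sits, and nothing in your sketch does it. (ii) The passage from ``bounded degree, hence a positive uniform lower bound $\kappa_0$ on the curvature, hence bounded diameter by Bonnet--Myers'' yields an enormous bound, and the reduction from that to the sharp $|V(G)|\le 10$ is delegated to an unspecified ``possibly computer-assisted verification''. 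That is in fact how \cite{LLW24} proceeds, but as written your text is a plausible plan for reproducing their result, not a proof of it.
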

Moreover, Liu, Lu and Wang \cite{LLW24} classified all the outerplanar graphs with minimum degree at least 2 and positive Lin-Lu-Yau curvature using Sagemath and Nauty/geng.  

In this paper, we consider another interesting class of planar graphs called \emph{Halin graphs}. Such graphs were studied by Halin \cite{Halin71} as examples of minimally $3$-connected planar graphs. A Halin graph is a graph obtained from a planar embedding of a tree graph having at least $4$ vertices and no vertex of degree $2$ by connecting all the leaves of the tree with a cycle. 
A Halin graph is also known as a roofless polyhedron. In fact, a graph $G$ with $V(G)=n$ and $E(G)=m$ is Halin if and only if it is polyhedral (i.e., planar and $3$-connected) and has a face whose number of vertices equals $m-n+1$. 
Halin graphs have very interesting properties. For example,  they are $1$-Hamiltonian \cite{Bondy} and  almost pancyclic \cite{BL85}. %It is also an interesting topic to study the geometric properties of Halin graphs.

Indeed, we consider a larger class of graphs, which we call \emph{generalized Halin graphs}. In a generalized Halin graph, we allow vertices of degree $2$. A wheel graph is a typical example of Halin graphs. 
%directly, which also shows that $|V(G)|\leq 12$.
Let $W_n$ be a wheel graph with order $n$, i.e. the graph obtained by joining a vertex $v_0$ to all the vertices of a cycle $C_{n-1}=v_1\cdots v_{n-1}$. We further denote by $W_{n}'$ the graph obtained from $W_{n-1}$ by subdividing the edge $v_0v_1$, and by $W_{n}''$ the graph obtained from $W_{n-2}$ by subdividing the two edges $v_0v_1$ and $v_0v_{\lceil\frac{n-2}{2}\rceil}$. Both $W_n'$ and $W_n''$ are generalized Halin graphs but not Halin. 

We identify all generealized Halin graphs with positive Lin-Lu-Yau curvature.
\begin{theorem}\label{generalized Halin graph with positive LLY}
    Let $G$ be a generalized Halin graph with positive Lin-Lu-Yau curvature. Then $|V(G)|\leq 12$, and $G$ is isomorphic to one of the following graphs: $W_n (4\leq n\leq 12)$, $W_{n}' (5\leq n\leq 9)$, $W_{n}'' (6\leq n\leq 10)$, and $H_i (1\leq i\leq 8)$ depicted in Figure \ref{H_i}.
\end{theorem}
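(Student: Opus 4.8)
The plan is to combine the rigid combinatorial structure of generalized Halin graphs with sharp upper bounds for Lin--Lu--Yau curvature. Write $G$ for the graph, $T$ for its underlying planar tree, $I$ for the set of internal (non-leaf) vertices of $T$, and $L=\{\ell_1,\dots,\ell_k\}$ for its leaves, listed in the cyclic order of the planar embedding, so that $\ell_1\ell_2\cdots\ell_k$ is the attached cycle $C$. Every leaf has degree $3$ in $G$, being adjacent to its tree-parent and its two neighbours on $C$. The first step is to describe the triangles of $G$: since $T$ contributes no cycle and two leaves are adjacent only when they are cyclically consecutive, every triangle of $G$ has the form $\{\ell_i,\ell_{i+1},p\}$, where $\ell_i,\ell_{i+1}$ are consecutive leaves with a common parent $p\in I$ (plus the degenerate triangle $\ell_1\ell_2\ell_3$ when $k=3$). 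In particular, no edge with both endpoints in $I$ lies in a triangle.

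The second, and main, step is to cut $T$ down to size using curvature. Two ingredients drive this. The first is the use of sharp upper bounds for $\kappa_{LLY}(u,v)$ in terms of $d_u$, $d_v$ and the number of short cycles (triangles, quadrilaterals, pentagons) through $uv$, of the type established by Jost--Liu and Bourne--Cushing--Liu--M\"unch--Peyerimhoff: applied to the triangle-free edges of $G$ with both endpoints in $I$, these force such an edge to be flanked by very specific short-cycle configurations (a quadrilateral $pq\ell_q\ell_p$ with $\ell_p,\ell_q$ cyclically consecutive leaf children of $p,q$, or a pentagon through an intermediate degree-$2$ vertex), and otherwise give $\kappa_{LLY}\le 0$. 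Since in the planar embedding the leaf descendants of the two components obtained by deleting an edge of $T[I]$ occupy disjoint arcs of $C$, this rigidity means that consecutive internal vertices must have their leaf descendants tightly interleaved around $C$; together with planarity this bounds both $|T[I]|$ and the maximum degree of $T[I]$. The second ingredient is the explicit optimal-transport computation for wheel-type spokes: if an internal vertex $p$ has three or more consecutive leaf children and $\ell'$ is a middle one, then the spoke $p\ell'$ carries two triangles and behaves like a wheel spoke; moving the mass at $p$ (distance $1$ to $\ell'$, the remainder split between the two neighbours of $\ell'$ on $C$ at distance $1$ and the far leaves at distance $2$, which one certifies optimal by a matching $1$-Lipschitz potential) shows that $\kappa_{LLY}(p\ell')>0$ forces $d_p$ to be small. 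Putting these together bounds $|V(G)|$.

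The third step is the finite enumeration inside this window, organized by the isomorphism type of $T[I]$. When $T[I]$ is a single vertex $p$, $G$ is the wheel $W_{k+1}$; the spoke computation gives $\kappa_{LLY}(\mathrm{spoke})=\tfrac{26-2n}{3(n-1)}$ for the order $n\ge 7$ (with $4\le n\le 6$ handled directly), which is positive exactly when $n\le 12$, while the rim edges are always positively curved — this yields $W_n$ for $4\le n\le 12$. When $T[I]$ has two or three vertices, $G$ is determined by a handful of integer parameters, namely the numbers of leaf children hung at each internal vertex subject to the adjacency constraints from step two; computing the curvature of every edge (spokes, rim edges, and the internal edges carrying their prescribed short cycles) over these finitely many possibilities produces exactly $W_n'$ $(5\le n\le 9)$, $W_n''$ $(6\le n\le 10)$, and $H_1,\dots,H_8$. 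Following the precedent of \cite{LLW24}, this final sweep can, if desired, be cross-checked by a short computer enumeration of all generalized Halin graphs on at most $12$ vertices.

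I expect the main obstacle to be the structural reduction of step two — extracting a genuinely finite list of possibilities for $T[I]$ from the curvature bounds. The difficulty is that vertices of degree $2$ are exceptional for Lin--Lu--Yau curvature: a degree-$2$ internal vertex hanging a single leaf off an already positively curved vertex can create precisely the quadrilateral or pentagon that keeps a neighbouring edge positively curved — the phenomenon behind $W_n'$ and $W_n''$ — so such configurations survive the generic estimate and must be analysed individually. Keeping the short-cycle bookkeeping honest through these degenerate cases, and then verifying positivity of \emph{every} edge (not merely the obviously critical spokes) in each surviving configuration, is where the real work of the proof lies.
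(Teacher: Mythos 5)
Your overall skeleton --- short-cycle upper bounds for $\kappa_{LLY}$ to rigidify the tree, an explicit transport computation for wheel spokes (your formula $\frac{26-2n}{3(n-1)}$ for $n\ge 7$ is correct and gives exactly the threshold $n\le 12$), and a finite sweep at the end --- is the same in spirit as the paper's, which likewise runs on a ``no $C_3$ or $C_4$'' curvature bound plus a computer check of the survivors. But the step you yourself flag as the obstacle, the structural reduction in step two, is genuinely missing, and the mechanism you propose for it would fail. Consider a caterpillar: a spine $p_1-\cdots-p_m$ of internal vertices, each carrying at least one leaf child. In the planar embedding the last leaf child of $p_i$ and the first leaf child of $p_{i+1}$ are cyclically consecutive, so \emph{every} spine edge $p_ip_{i+1}$ automatically lies in a quadrilateral, and every spoke lies in a triangle (two consecutive sibling leaves) or in such a quadrilateral; where a spine vertex has no leaf child one gets a pentagon instead. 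Hence the generic bound $\kappa_{LLY}\le \frac{1}{d_x}+\frac{2}{d_y}-1$ for edges in no $C_3$ or $C_4$ never fires along the spine, and nothing in your step two bounds $m$, $|T[I]|$, or the depth of $T$. The paper closes exactly this hole with two non-generic estimates: Lemma \ref{lemma:cycle<6}, which shows via the hand-built Lipschitz function $f(z)=\min\{d(z,N[x]\setminus\{y\}),\,d(z,t)-1\}$ (applied to the tree edge from the maximum-degree vertex $x$ to its neighbour $y$ toward $p$) that any two outer vertices $p,q$ adjacent on $C$ but lying in different components of $T-\{x\}$ satisfy $d_T(x,p)+d_T(x,q)\le 4$ --- this is what kills long arms, since each component's leaves occupy an arc of $C$ whose endpoints abut leaves of other components --- and Lemma \ref{lemma: no double X}, which forbids two cyclically consecutive components of $T-\{x\}$ from both containing two or more leaves. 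You would need to supply statements of this strength; counting triangles, quadrilaterals and pentagons through individual edges of $T[I]$ does not substitute for them.

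A smaller but concrete error: your final enumeration is organized by $|T[I]|\in\{1,2,3\}$, and this does not cover the classified graphs. The graph $H_1$, arising from the spider with three legs of length two (the triple $(2,2,2)$ in the paper's Case 2), has four internal vertices, and some of $H_3,H_4,H_5$ also exceed three. The paper instead stratifies by the maximum degree $D(T)$ and by the distance profile $d_T(x,\cdot)$ of the outer vertices from a maximum-degree vertex $x$, which is the quantity its lemmas actually control; if you keep your $T[I]$-based organization you must at least extend the case analysis accordingly.
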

\begin{figure}[ht]
   \centering
   \includegraphics[width=0.95\textwidth,height=0.4\textwidth]{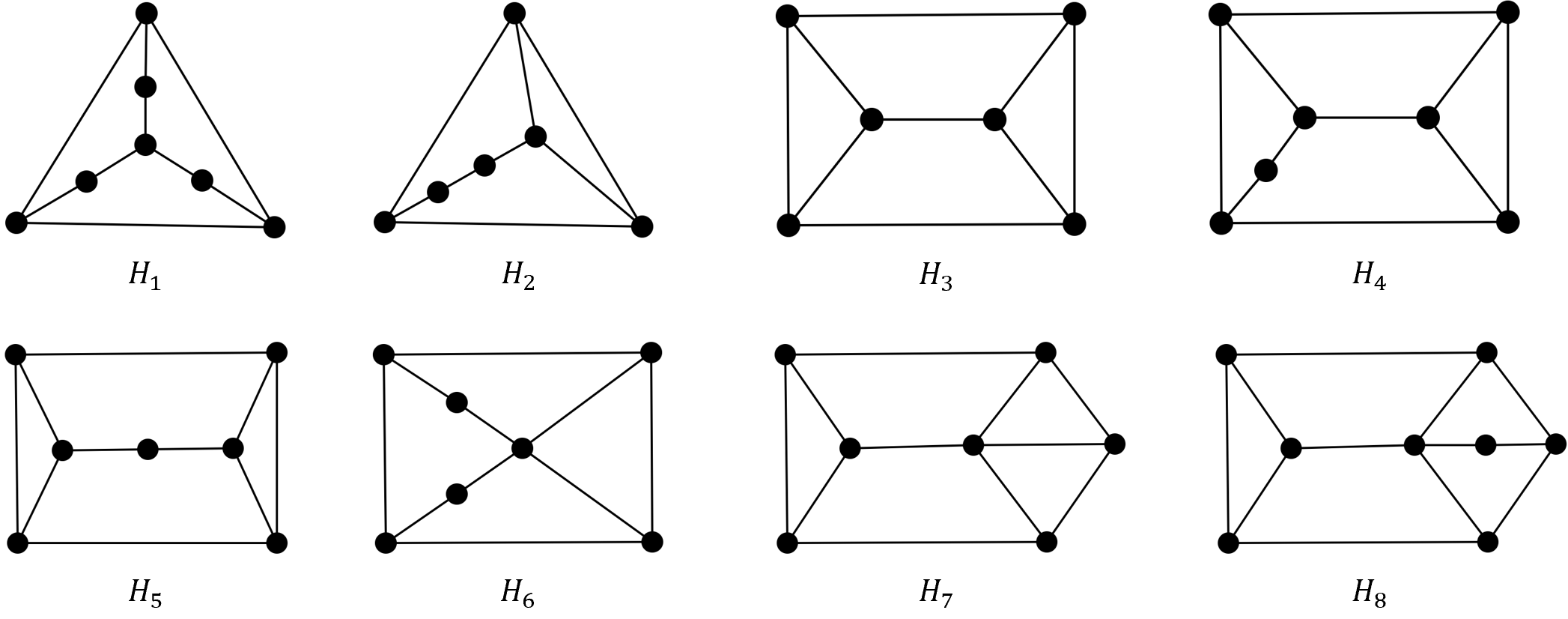}
   \caption{Generalized Halin graphs $H_i, 1\leq i\leq 8$}
  \label{H_i}
\end{figure}
%All the graphs in Figure \ref{H_i} have positive Lin-Lu-Yau curvature. We can calculate their Lin-Lu-Yau curvature by the graph curvature calculator \cite{CKLLS22}.

Consequently, we classify all Halin graphs with positive Lin-Lu-Yau curvature.
\begin{corollary}
    Let $G$ be a Halin graph with positive Lin-Lu-Yau curvature. Then $G$ is isomorphic to one of the graphs: wheel graphs $W_n (4\leq n\leq 12)$ and the graphs $H_i$ $(i=3,7)$ depicted in Figure \ref{H_i}.
\end{corollary}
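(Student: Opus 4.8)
The plan is to read the corollary straight off Theorem~\ref{generalized Halin graph with positive LLY} by filtering its list of generalized Halin graphs down to those that are genuinely Halin, i.e., have no vertex of degree $2$. Since a Halin graph is by definition a generalized Halin graph of minimum degree at least $3$, every Halin graph with positive Lin--Lu--Yau curvature must occur among $W_n\ (4\leq n\leq 12)$, $W_n'\ (5\leq n\leq 9)$, $W_n''\ (6\leq n\leq 10)$, and $H_i\ (1\leq i\leq 8)$; the task is to decide, for each entry, whether it contains a degree-$2$ vertex.

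First I would dispatch the wheels: in $W_n$ the hub $v_0$ has degree $n-1\geq 3$ and every rim vertex has degree $3$, so each $W_n$ with $4\leq n\leq 12$ is Halin and survives. Second, $W_n'$ and $W_n''$ are obtained from smaller wheels by subdividing one, respectively two, spoke edges, and each subdivision produces a vertex of degree exactly $2$; hence no $W_n'$ or $W_n''$ is Halin and all of them are excluded. Finally I would inspect $H_1,\dots,H_8$ in Figure~\ref{H_i} individually, reading the degree of each vertex off the drawing: the claim to verify is that $H_3$ and $H_7$ have minimum degree $3$ while each of $H_1,H_2,H_4,H_5,H_6,H_8$ has at least one vertex of degree $2$ and is therefore discarded. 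Combining the three cases leaves precisely $W_n\ (4\leq n\leq 12)$ together with $H_3$ and $H_7$, which is the assertion.

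The only step needing any care — a mild ``obstacle'' at worst — is the last one: confirming from Figure~\ref{H_i} that the six excluded $H_i$ each really do carry a degree-$2$ vertex, and that $H_3$ and $H_7$ really do not. This is a finite inspection against the explicit pictures rather than an argument, so no genuine difficulty arises beyond reading the figure accurately; conversely, once Theorem~\ref{generalized Halin graph with positive LLY} is in hand there is nothing further to prove, since positivity of the Lin--Lu--Yau curvature of the surviving graphs is already guaranteed by that theorem.
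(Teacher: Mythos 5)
Your proposal is correct and is exactly the argument the paper intends: the corollary is stated as an immediate consequence of Theorem~\ref{generalized Halin graph with positive LLY}, obtained by discarding from its list every generalized Halin graph containing a degree-$2$ vertex (the subdivided wheels $W_n'$, $W_n''$ and the $H_i$ with $i\neq 3,7$). Nothing further is needed beyond the finite inspection of Figure~\ref{H_i} that you describe.
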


%Comments on the proof tricks: key points: divided cases according to the degree of the central vertex?

A key step in our proof is to divide generalized Halin graphs with positive Lin-Lu-Yau curvature into subclasses, according to the maximum degree $D(T)$ of the corresponding tree graphs.

Throughout the paper, we use the following notation. Let $G=(V,E)$ be an undirected simple connected graph. For any $x\in V$, let $N(x)$ be the set of neighbors of $x$ and $d_x:=|N(x)|$ be its degree. We use $N[x]$ to denote $N(x)\cup \{x\}$. Let $D(G)$ be the maximum vertex degree of $G$. For any $x,y\in V(G)$, we denote the combinatorial distance between $x$ and $y$ by $d(x,y)$. For a vertex $x$ and a vertex set $S$, the distance between $x$ and $S$ is denoted by
$$d(x,S):=\min\{d(x,s)|s\in S\}.$$
We write $x\sim y$ if $\{x,y\}\in E$ is an edge. A function $f$ on $V(G)$ is called $1$-Lipschitz if 
$|f(x)-f(y)|\leq d(x,y)$ holds for any $x,y\in V(G)$, and we denote the set of $1$-Lipschitz functions by $Lip(1)$. 

\section{Preliminaries}\label{Halin Preliminaries}
In this sections, we collect some basics about Lin-Lu-Yau curvature and Halin graphs.

\subsection{Ollivier's Ricci curvature and Lin-Lu-Yau curvature}
We recall the definitions of Ollivier's Ricci curvature and Lin-Lu-Yau curvature. First, we recall the definition of Wasserstein distance between probability measures on graphs. 
\begin{definition}[Wasserstein distance]
Let $G=(V,E)$ be a locally finite graph with two probability measures $m_1$ and $m_2$ on $V$. The Wasserstein distance between $m_1$ and $m_2$ is defined as
$$W(m_1,m_2):=\inf\limits_\pi\sum\limits_{y\in V}\sum\limits_{x\in V}\pi(x,y)d(x,y),$$
where the infimum is taken over all maps $\pi:V\times V\rightarrow [0,1]$ satisfying
$$m_1(x)=\sum\limits_{y\in V}\pi(x,y) \text{ for any $x\in V$ and }m_2(y)=\sum\limits_{x\in V}\pi(x,y) \text{ for any $y\in V$}.$$
\end{definition}
%Based on the Kantorovich duality theorem, the Wasserstein distance can also be written as
%$$W(m_1, m_2)=\sup _f \sum_{x \in V} f(x)[m_1(x)-m_2(x)],$$
%where the supremum is taken over all $1$-Lipschitz functions $f$, which mean \[|f(x)-f(y)|\leq d(x,y), \quad \forall\,x,y\in V(G). \]
For a vertex $x\in V$ and any $\alpha\in[0,1]$, the probability measure $m_x^\alpha$ is defined as
     $$m_x^\alpha(v)= \begin{cases}\alpha, & \text { if } v=x, \\ (1-\alpha) / d_x, & \text { if } v \in N(x), \\ 0, & \text { otherwise. }\end{cases}$$
Then, the $\alpha$-Ollivier-Ricci curvature of an edge $\{x,y\}\in E(G)$ is defined as
$$\kappa_\alpha(x, y)=1-W\left(m_x^\alpha, m_y^\alpha\right).$$
In particular, we have $\kappa_\alpha(x,y)>0$ if and only if $W(m_x^\alpha, m_y^\alpha)<1=d(x,y)$.
%Lin, Lu and Yau \cite{LLY11} proved that $\frac{\kappa_\alpha(x,y)}{1-\alpha}$ is increasing on $[0,1)$ and bounded, which means that$\lim\limits_{\alpha\rightarrow 1}\frac{\kappa_\alpha(x,y)}{1-\alpha}$ exists. Moreover, they introduced a modified Ollivier-Ricci curvature as follows:
\begin{definition}[Lin-Lu-Yau curvature]
    Let $G=(V,E)$ be a locally finite graph. The Lin-Lu-Yau curvature of an edge $\{x,y\}\in E$ is defined as
    $$\kappa_{LLY}(x,y):=\lim\limits_{\alpha\rightarrow 1}\frac{\kappa_\alpha(x,y)}{1-\alpha}.$$
\end{definition}
%  We say a graph $G$ has positive Lin-Lu-Yau curvature if $\kappa_{LLY}(x,y)>0$ for any edge $\{x,y\}$. 
The ratio $\kappa_\alpha(x,y)/(1-\alpha)$ is constant when $\alpha$ is large enough. Indeed, it was proved in \cite{BCLMP18} that 
\begin{equation}\label{eq:bourne}
    \kappa_{LLY}(x,y)=\frac{\kappa_\alpha(x,y)}{1-\alpha}, \,\,\text{for any $\alpha\in \left[\frac{1}{\max\{d_x,d_y\}+1},1\right]$}.
\end{equation}
Another limit-free formulation of Lin-Lu-Yau curvature is due to M\"{u}nch and Wojciechowski \cite{MW19}. For any locally finite graph $G$, the  normalized graph Laplacian $\Delta$ is defined as
$$\Delta f(x):=\frac{1}{d_x} \sum_{y: y \sim x}(f(y)-f(x)), \text{ for any $f: V(G)\to \mathbb{R}$ and any $x\in V(G)$}.$$
The following identity is proved via a reformulation of the Kantorovich duality for Wasserstein distance.
\begin{theorem}[Curvature via the Laplacian {\cite[Corollary 2.2]{MW19}}]\label{Curvature via the Laplacian}
    Let $G$ be a locally finite graph and let $\{x,y\}$ be an edge. Then
    $$\kappa_{LLY}(x, y)=\inf _{\substack{f:N[x]\cup N[y]\to \mathbb{Z}\\f \in Lip(1) \\ f(y)-f(x)=1}} \left(\Delta f(x)-\Delta f(y)\right).$$
\end{theorem}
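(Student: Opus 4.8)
The plan is to derive the identity directly from Kantorovich duality for the Wasserstein distance, convert the resulting supremum into an infimum via the sign change $f \mapsto -f$, and then pass to the limit $\alpha \to 1$ using a discreteness argument. Since $m_x^\alpha$ and $m_y^\alpha$ are finitely supported, $W(m_x^\alpha, m_y^\alpha)$ is the value of a finite linear transportation program, so linear programming duality yields the Kantorovich identity
\begin{equation*}
W(m_x^\alpha, m_y^\alpha) = \sup_{f \in Lip(1)} \sum_{v \in V} f(v)\bigl(m_x^\alpha(v) - m_y^\alpha(v)\bigr).
\end{equation*}
First I would compute the right-hand side explicitly. Using the definition of $m_x^\alpha$ together with $\frac{1}{d_x}\sum_{v \sim x} f(v) = \Delta f(x) + f(x)$, one gets $\sum_v f(v) m_x^\alpha(v) = f(x) + (1-\alpha)\Delta f(x)$, and similarly for $y$. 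Hence
\begin{equation*}
W(m_x^\alpha, m_y^\alpha) = \sup_{f \in Lip(1)} \Bigl[(f(x)-f(y)) + (1-\alpha)(\Delta f(x) - \Delta f(y))\Bigr],
\end{equation*}
and replacing $f$ by $-f$ (which stays in $Lip(1)$) rewrites this in terms of $f(y)-f(x)$ and $\Delta f(y)-\Delta f(x)$. Subtracting from $1$ and dividing by $1-\alpha$ then gives
\begin{equation*}
\frac{\kappa_\alpha(x,y)}{1-\alpha} = \inf_{f \in Lip(1)} \left[\frac{1 - (f(y)-f(x))}{1-\alpha} + \bigl(\Delta f(x) - \Delta f(y)\bigr)\right].
\end{equation*}

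Next I would justify two reductions of the feasible set. The objective depends on $f$ only through its values on $N[x] \cup N[y]$, and by the McShane--Whitney extension every function that is $1$-Lipschitz on $(N[x]\cup N[y], d)$ extends to a global element of $Lip(1)$; so the infimum may be taken over $f : N[x]\cup N[y] \to \IR$ in $Lip(1)$. On this finite set the constraints $|f(u)-f(v)| \le d(u,v)$ form a system of difference constraints with integer right-hand sides, whose constraint matrix is totally unimodular; after fixing $f(x)=0$ the feasible region is a bounded integral polytope. Since the objective is affine in $f$ for each fixed $\alpha$, its minimum is attained at a vertex, which is integer-valued. Thus the infimum is unchanged if we restrict to integer-valued $f$, as in the statement.

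Finally I would pass to the limit. For integer-valued $f \in Lip(1)$ and the edge $\{x,y\}$ one has $f(y)-f(x) \in \{-1,0,1\}$, and the crude bounds $|\Delta f(x)| \le 1$, $|\Delta f(y)| \le 1$ give $|\Delta f(x)-\Delta f(y)| \le 2$. When $f(y)-f(x)=1$ the first term vanishes and the objective equals $\Delta f(x)-\Delta f(y)$; when $f(y)-f(x)\le 0$ the first term is at least $\frac{1}{1-\alpha}$, so the objective is at least $\frac{1}{1-\alpha}-2$. Hence as soon as $\frac{1}{1-\alpha}-2 > 2$, i.e. $\alpha > \tfrac34$, the minimizing integer-valued $f$ must satisfy $f(y)-f(x)=1$, and
\begin{equation*}
\frac{\kappa_\alpha(x,y)}{1-\alpha} = \inf_{\substack{f:N[x]\cup N[y]\to \IZ \\ f \in Lip(1),\ f(y)-f(x)=1}} \bigl(\Delta f(x)-\Delta f(y)\bigr)
\end{equation*}
is constant for all such $\alpha$; letting $\alpha \to 1$ gives the claimed identity and reproves that the ratio is eventually constant, consistent with \eqref{eq:bourne}. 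The main obstacle is exactly this interchange of limit and infimum: a priori the minimizing real-valued $f$ could have $f(y)-f(x)$ tending to $1$ as $\alpha\to1$ in a way that keeps the blowing-up first term bounded, so the limit of the infima need not equal the infimum of the limits. The integrality reduction is what resolves this, since once $f$ is integer-valued the quantity $f(y)-f(x)$ is quantized away from $1$ unless it equals $1$ exactly, and the uniform bound $|\Delta f(x)-\Delta f(y)|\le 2$ then forces localization onto $\{f(y)-f(x)=1\}$ for $\alpha$ near $1$; establishing the integrality cleanly (total unimodularity of the difference-constraint polytope) is therefore the crux.
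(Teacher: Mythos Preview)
The paper does not supply its own proof of this theorem; it merely cites \cite[Corollary 2.2]{MW19} and remarks that ``the following identity is proved via a reformulation of the Kantorovich duality for Wasserstein distance.'' Your proposal carries out precisely such a derivation: Kantorovich duality, the rewriting $\sum_v f(v)m_x^\alpha(v)=f(x)+(1-\alpha)\Delta f(x)$, restriction to $N[x]\cup N[y]$ via McShane extension, integrality of optimizers from total unimodularity of the difference-constraint polytope, and localization onto $f(y)-f(x)=1$ for $\alpha$ near $1$. The argument is correct and matches the approach the paper points to; your explicit discussion of why the integrality step is needed (to quantize $f(y)-f(x)$ and thereby justify the limit/infimum interchange) is a useful clarification that the paper's one-line description omits.
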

Next we recall a key lemma due to Lin, Lu and Yau \cite{LLY14}.
\begin{lemma}[{\cite[Lemma 2]{LLY14}}]\label{no C3 C4}
    Suppose that an edge $\{x,y\}$ in a graph $G$ is not in any $C_3$ or $C_4$. Then
$$\kappa_{LLY}(x,y)\leq\frac{1}{d_x}+\frac{2}{d_y}-1.$$
\end{lemma}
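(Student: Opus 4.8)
I plan to prove the upper bound directly from the limit-free formula of Theorem~\ref{Curvature via the Laplacian}, which expresses $\kappa_{LLY}(x,y)$ as an \emph{infimum} of $\Delta f(x)-\Delta f(y)$ over the integer-valued functions $f\in Lip(1)$ on $N[x]\cup N[y]$ normalized by $f(y)-f(x)=1$. Since an infimum is at most its value at any single admissible point, the entire argument reduces to exhibiting one such test function $f$ whose associated quantity equals $\frac{1}{d_x}+\frac{2}{d_y}-1$, together with a verification that $f$ genuinely lies in the admissible class. Note first that the two hypotheses translate into clean combinatorial statements about the edge: having no $C_3$ through $\{x,y\}$ means $N(x)\cap N(y)=\emptyset$, and having no $C_4$ through $\{x,y\}$ means there is no edge joining $N(x)\setminus\{y\}$ to $N(y)\setminus\{x\}$.

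I would take the \emph{asymmetric} test function
$$f(x)=0,\qquad f(y)=1,\qquad f\equiv 0 \text{ on } N(x)\setminus\{y\},\qquad f\equiv 2 \text{ on } N(y)\setminus\{x\},$$
which is integer-valued and satisfies $f(y)-f(x)=1$ by construction. The substantive step is to check $f\in Lip(1)$ on $N[x]\cup N[y]$, i.e.\ $|f(u)-f(v)|\le d(u,v)$ for all pairs. Most pairs are immediate, since $f$ only takes the values $0,1,2$: pairs of equal $f$-value (within $N(x)\setminus\{y\}$, within $N(y)\setminus\{x\}$, or $x$ against $N(x)\setminus\{y\}$) cost nothing; the pair $\{x,y\}$ has $|f(x)-f(y)|=1=d(x,y)$; for $z\in N(x)\setminus\{y\}$ we have $z\not\sim y$ (otherwise $z\in N(x)\cap N(y)$, a forbidden triangle), so $d(y,z)=2\ge 1=|f(y)-f(z)|$; and symmetrically for $w\in N(y)\setminus\{x\}$ we have $w\not\sim x$, so $d(x,w)=2=|f(x)-f(w)|$.

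The only pair that truly invokes the hypotheses is a pair $(z,w)$ with $z\in N(x)\setminus\{y\}$ and $w\in N(y)\setminus\{x\}$, where $|f(z)-f(w)|=2$ and I must show $d(z,w)\ge 2$. If $z=w$, then $z$ is a common neighbour of $x$ and $y$, giving the triangle $x\sim z\sim y\sim x$ and contradicting the no-$C_3$ assumption; if $z\sim w$, then $x\sim z\sim w\sim y\sim x$ is a $4$-cycle through $\{x,y\}$, contradicting the no-$C_4$ assumption. Hence $d(z,w)\ge 2$ and $f$ is indeed $1$-Lipschitz. This is precisely the delicate point, and it is what forces the asymmetric choice: the more obvious extremal guess $f\equiv -1$ on $N(x)\setminus\{y\}$ and $f\equiv 2$ on $N(y)\setminus\{x\}$ would require $d(z,w)\ge 3$, which can fail because a common neighbour of $z$ and $w$ lying \emph{outside} $\{x,y\}$ is not forbidden by the hypotheses and can force $d(z,w)=2$. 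With admissibility established, a direct computation gives $\Delta f(x)=\frac{1}{d_x}$ (only the neighbour $y$ of $x$ contributes to the sum) and $\Delta f(y)=\frac{d_y-2}{d_y}=1-\frac{2}{d_y}$, whence $\Delta f(x)-\Delta f(y)=\frac{1}{d_x}+\frac{2}{d_y}-1$, and Theorem~\ref{Curvature via the Laplacian} yields $\kappa_{LLY}(x,y)\le \frac{1}{d_x}+\frac{2}{d_y}-1$.
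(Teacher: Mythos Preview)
Your proof is correct and takes essentially the same approach as the paper: the test function you write down is exactly the restriction to $N[x]\cup N[y]$ of the paper's distance function $f(z)=d(z,N[x]\setminus\{y\})$, and the Laplacian computation is identical. The only cosmetic difference is that the paper's choice is automatically $1$-Lipschitz as a distance-to-a-set function, whereas you define $f$ by its values and verify the Lipschitz condition pairwise.
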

\begin{proof}
    As a warm-up, we give a proof of this lemma using Theorem \ref{Curvature via the Laplacian}. Consider the function $f\in Lip(1)$ defined as $f(z):=d(z,N[x]\backslash\{y\})$. We check that $f(y)-f(x)=1$. Then we have by Theorem \ref{Curvature via the Laplacian} 
    \begin{align*}
        \kappa_{LLY}(x,y)\leq \Delta f(x)-\Delta f(y)\le \frac{1}{d_x}-\frac{1}{d_y}\left(-1+\sum_{z:z\sim y, z\neq x}(2-1)\right)=\frac{1}{d_x}+\frac{2}{d_y}-1.
    \end{align*}
    This completes the proof.
\end{proof}

\subsection{Generalized Halin graphs}

\begin{definition}
    A generalized Halin graph $H(T,C)$ is a graph constructed by starting from a planar embedding of a tree $T$ with maximum vertex degree $D(T)\geq 3$, and connecting all the leaves of the tree with a cycle $C$. A Halin graph is a generalized Halin graph with no vertex of degree two.
\end{definition}
We remark that a tree $T$ may not decide a unique generalized Halin graph since it can be embedded into a plane in different ways.

\section{Proof of Theorem \ref{generalized Halin graph with positive LLY}}\label{section 3}
For convenience, we first introduce some notations. Let $G=H(T,C)$ be a generalized Halin graph. We denote by $d_T(\cdot,\cdot)$ the combinatorial distance in the tree $T$. Let $x$ be a vertex in $T$ with maximum degree $D(T)$. Recall that in a generalized Halin graph $G=H(T,C)$, we have a planar embedding of the tree $T$. We consider the connected components of $T-\{x\}$, and label them clockwise as $A_1, A_2,\ldots, A_c$. Furthermore, we label the leaves of $T$ in each $A_i$ clockwise as $A_{i,1},A_{i,2},\cdots,A_{i,k_i}$.  In this way, we label all the vertices on the cycle $C$ of $G=H(T,C)$, which we call {\it outer vertices}. We depict an example in Figure \ref{fig:clockwise}.

\begin{figure}[ht]
   \centering
   \includegraphics[width=0.4\textwidth,]{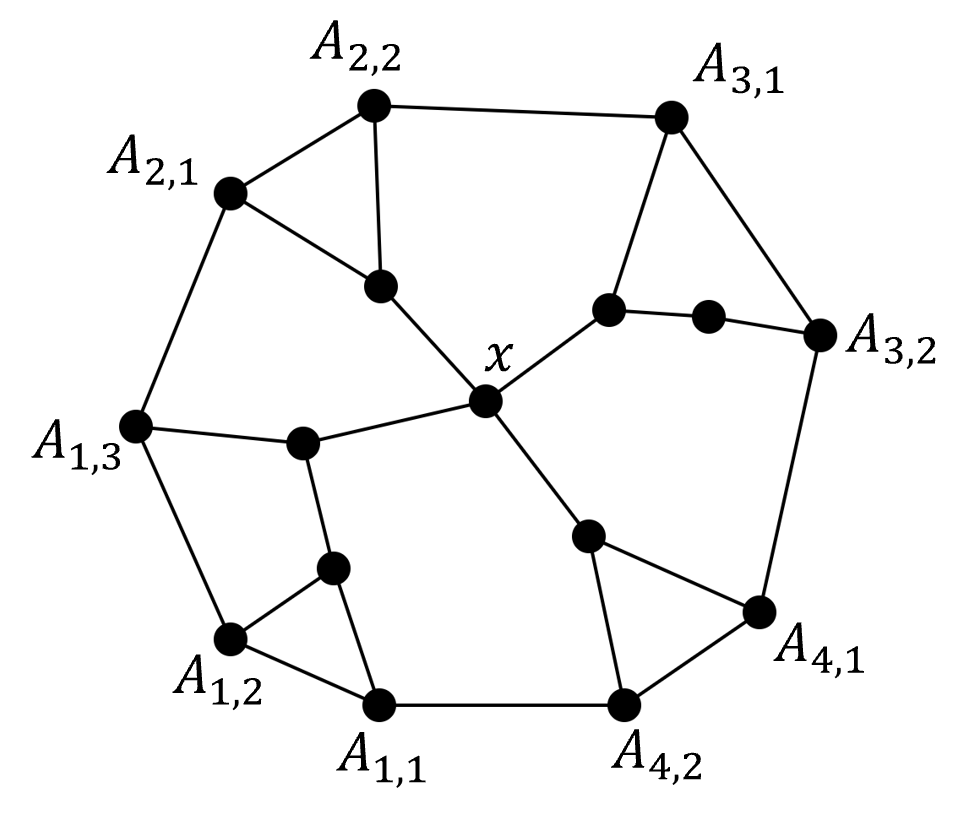}
   \caption{A example of labelling outer vertices in each component of $T-\{x\}$}
   \label{fig:clockwise}
\end{figure}

Before presenting our proof, we prepare two lemmas. 
\begin{lemma}\label{lemma: no double X}
Let $G=H(T,C)$ be a generalized Halin graph with positive Lin-Lu-Yau curvature, and $A_1,A_2,\ldots, A_c$ be defined as above. If $|A_i\cap C|\geq 2$ for some $i\in \{1,2,\ldots, c\}$, then $|A_{i+1}\cap C|=|A_{i-1}\cap C|=1$, where we use the notations $A_0=A_c$ and $A_{c+1}=A_1$.
  %  Suppose that $A_{i+1,1}$ and $A_{i,k}$ are adjacent. If $|A_{i+1}|\geq 2$, then $k=1$.
\end{lemma}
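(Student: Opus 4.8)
The plan is to establish the \emph{local fact} that for every index $j$ the two consecutive components $A_j$ and $A_{j+1}$ cannot both meet the cycle $C$ in at least two vertices. Granting this, the lemma is immediate: if $|A_i\cap C|\geq 2$, then applying the local fact with $j=i$ and with $j=i-1$ gives $|A_{i+1}\cap C|\leq 1$ and $|A_{i-1}\cap C|\leq 1$, and since every component of $T-\{x\}$ contains a leaf of $T$ (hence an outer vertex), these are both equal to $1$. To prove the local fact, suppose for contradiction that $k:=|A_j\cap C|\geq 2$ and $k':=|A_{j+1}\cap C|\geq 2$ for some $j$, and set $u:=A_{j,k}$, $v:=A_{j+1,1}$. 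By the clockwise labelling of the outer vertices, $u$ and $v$ are consecutive on $C$, so $e:=\{u,v\}\in E(G)$; and since $u$ and $v$ are leaves of $T$, each has one $T$-edge and two cycle-edges, so $d_u=d_v=3$. I will show $e$ lies in no $C_3$ and in no $C_4$; then Lemma \ref{no C3 C4} gives
$$\kappa_{LLY}(u,v)\leq \frac{1}{d_u}+\frac{2}{d_v}-1=\frac{1}{3}+\frac{2}{3}-1=0,$$
contradicting the assumption that the Lin-Lu-Yau curvature is positive on every edge.

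The heart of the proof is the triangle/quadrilateral check. Root $T$ at $x$ and let $p_u$, $p_v$ be the $T$-parents of $u$, $v$; since $k,k'\geq 2$ the outer vertices $u':=A_{j,k-1}$ and $v':=A_{j+1,2}$ are defined, and we have $N(u)=\{v,u',p_u\}$ and $N(v)=\{u,v',p_v\}$. Two observations do all the work. First, $p_u$ and $p_v$ are not leaves of $T$: the $T$-path from the leaf $u$ to $x$ passes through $p_u\neq x$, so $p_u$ has at least two neighbours in $T$, and likewise for $p_v$; consequently $p_u,p_v$ are not outer vertices and their only $G$-neighbours are their $T$-neighbours. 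Second, $\{u',p_u\}\subseteq A_j$ and $\{v',p_v\}\subseteq A_{j+1}$, and $A_j$ and $A_{j+1}$ are disjoint. The second observation shows $u$ and $v$ have no common neighbour, so $e$ is in no $C_3$. For a $C_4$ through $e$ one would need an edge of $G$ joining a vertex of $\{u',p_u\}$ to a vertex of $\{v',p_v\}$; no such edge can be a $T$-edge, since its endpoints lie in different components of $T-\{x\}$, and none can be a cycle-edge either: $p_u$ and $p_v$ are not on $C$ by the first observation, while, because $c=D(T)\geq 3$, there is a component $A_\ell$ with $\ell\notin\{j,j+1\}$ whose outer vertices all lie on $C$ strictly between $v'$ and $u'$, so $u'\not\sim v'$. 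Hence $e$ lies in no $C_4$, and the contradiction is reached.

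I expect the only delicate point to be the bookkeeping in the small cases $k=2$ or $k'=2$: then, for example, the cycle-neighbour of $u'=A_{j,1}$ other than $u$ lies in $A_{j-1}$ rather than in $A_j$, and one must make sure it cannot coincide with $v'$ or otherwise create a short cycle through $e$. This is exactly where the hypothesis $c=D(T)\geq 3$ is used — it forces $A_{j-1},A_j,A_{j+1}$ to be pairwise distinct components (equivalently, $j-1,j,j+1$ pairwise distinct modulo $c$), which rules out any such accidental identification or adjacency. Beyond this cyclic-index housekeeping, the argument is just a matter of reading off $N(u)$ and $N(v)$ and quoting Lemma \ref{no C3 C4}.
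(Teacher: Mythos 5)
Your proposal is correct and follows essentially the same route as the paper: take the cycle edge joining the last outer vertex of one component to the first outer vertex of the adjacent component, check (using that both components contain at least two outer vertices and that $c=D(T)\geq 3$) that this edge lies in no $C_3$ or $C_4$, and apply Lemma \ref{no C3 C4} to force curvature $\leq \frac{1}{3}+\frac{2}{3}-1=0$. Your write-up is in fact more explicit than the paper's about why no triangle or quadrilateral can occur, but the underlying argument is identical.
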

\begin{proof}
Suppose that either $|A_{i+1}\cap C|>1$ or $|A_{i-1}\cap C|>1$. Without loss of generality, we assume $|A_{i-1}\cap C|>1$. Consider the edge $\{A_{i,1}, A_{i-1,k_{i-1}}\}$. There exist vertices $A_{i,2}\sim A_{i,1}$ and $A_{i-1, k_{i-1}-1}\sim A_{i-1,k_{i-1}}$. 
Since $d_x=|D(T)|\ge 3$, we have $d(A_{i,2}, A_{i-1,k_{i-1}})\geq 2$. Therefore, we observe that the edge $\{A_{i,1}, A_{i-1,k_{i-1}}\}$ is not in any $C_3$ or $C_4$. By Lemma \ref{no C3 C4}, we have $\kappa_{LLY}(A_{i,1},A_{i-1,k_{i-1}})\leq \frac{1}{3}+\frac{2}{3}-1= 0$, which is a contradiction.
\end{proof}

\begin{lemma}\label{lemma:cycle<6}
Let $G=H(T,C)$ be a generalized Halin graph with positive Lin-Lu-Yau curvature, and $A_1,A_2,\ldots, A_c$ be defined as above.
For any two adjacent vertices $p$ and $q$ on $C$ which belong to different components $A_i$ and $A_j$, respectively, we have \[d_T(x,p)+d_T(x,q)\leq 4.\]
\end{lemma}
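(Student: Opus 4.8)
The plan is to argue by contradiction. Assume $G$ has positive Lin--Lu--Yau curvature on every edge, yet some edge $\{p,q\}$ of $C$ with $p\in A_i$, $q\in A_j$, $i\neq j$, satisfies $d_T(x,p)+d_T(x,q)\geq 5$. As $p$ and $q$ are consecutive on $C$, after relabelling the components we may take $j=i+1$, $p=A_{i,k_i}$, $q=A_{i+1,1}$, and after exchanging the two components if necessary we may assume $d_T(x,p)\geq d_T(x,q)$, so that $d_T(x,p)\geq 3$. Write $x=w_0,w_1,\dots,w_s=p$ for the geodesic from $x$ to $p$ in $T$ (so $s\geq 3$), let $p'=w_{s-1}$ be the parent of $p$, and let $p_-$ be the neighbour of $p$ on $C$ other than $q$; note that every outer vertex has degree $3$ in $G$, so in particular $d_p=d_q=3$. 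In each case below I will produce an edge $e$ and an integer-valued $f\in Lip(1)$ on the union of the closed neighbourhoods of the endpoints of $e$, increasing by $1$ across $e$, for which the drop of $\Delta f$ across $e$ is $\leq 0$; by Theorem~\ref{Curvature via the Laplacian} this makes $\kappa_{LLY}$ non-positive on $e$, a contradiction.

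My first attempt is always the edge $e=\{p,q\}$ with $f(z)=1-d\bigl(z,N[q]\setminus\{p\}\bigr)$, which is automatically integer-valued and $1$-Lipschitz. Since $N[q]\setminus\{p\}$ consists of $q$, its parent $q'$, and its other $C$-neighbour $q_+$, one gets $f(p)=0$ and $f(q)=f(q')=f(q_+)=1$; and because $d_T(x,p)\geq 3$ the vertex $p'$ is at distance $2$ from this set, so $f(p')=-1$. Feeding these values into Theorem~\ref{Curvature via the Laplacian} gives $\kappa_{LLY}(p,q)\leq\tfrac13\bigl(f(p_-)+1\bigr)$, so it suffices to have $f(p_-)=-1$, i.e. $d\bigl(p_-,N[q]\setminus\{p\}\bigr)=2$. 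Using $d_T(x,p)\geq 3$ and the above description of $N[q]$, this holds \emph{unless} one of the following occurs: (a) $|C|=3$; (b) $|C|=4$; (c) $q$ is a leaf adjacent to $x$ and $p_-$ is also a leaf adjacent to $x$ (in which case $d_T(x,p)\geq 4$).

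It remains to handle the exceptional configurations (a)--(c), and the point is that in each of them the hypothesis on $d_T(x,p)$ forces $A_i$ to contain a long, essentially pendant, path: in (a) one has $D(T)=3$ and all three components are bare paths from $x$; in (c) necessarily $p_-\notin A_i$, so $A_i$ has $p$ as its only leaf and is the bare path $x=w_0,\dots,w_s=p$ with $s\geq 4$; in (b) the component $A_i$ is either a bare path or a path with a single branch vertex $b$. In all these situations I pass to a short-degree edge on, or just past the end of, the $x$--$p$ geodesic. The model case is $e=\{w_1,w_2\}$ with $d_{w_1}=d_{w_2}=2$: defining $f$ on $\{x,w_1,w_2,w_3\}$ by $f(x)=-1$, $f(w_1)=0$, $f(w_2)=1$, $f(w_3)=2$, this lies in $Lip(1)$ as soon as $d(x,w_3)=3$ — the generic situation — and then Theorem~\ref{Curvature via the Laplacian} gives $\kappa_{LLY}(w_1,w_2)\leq 0$. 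The value $d(x,w_3)$ can only drop below $3$ when $w_3=p$ (so $s=3$) and a $C$-neighbour of $p$ is a leaf adjacent to $x$, creating a short-cut of length $2$; in that residual sub-case I use instead the edge $e=\{w_2,p\}$ (endpoints of degree $2$ and $3$) with a test function realising the values $-1,0,1,2$ on $N[w_2]\cup N[p]$ adapted to that short-cut, and one checks directly that $\kappa_{LLY}(w_2,p)\leq 0$. When $A_i$ carries a branch vertex $b$ (possible only in (b)) the same recipe applies with $\{w_1,w_2\}$ replaced by a suitable short-degree edge on the $x$--$b$ path or on one of the two branches below $b$.

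I expect the genuine difficulty to be not the choice of edge but the bookkeeping of graph distances. Every assertion that a proposed $f$ is $1$-Lipschitz, and every evaluation of $\Delta f$, depends on knowing $d$ between leaves and near-leaf vertices that can be joined both through $x$ and around $C$; it is precisely the presence of such cycle short-cuts, which shrink $d$ below the tree distance, that forces the split $|C|\in\{3,4\}$ versus $|C|\geq 5$ and, inside the latter, the side conditions on $d_T(x,p_-)$ and $d_T(x,q)$. Once those distances are pinned down in each of the finitely many cases, the curvature estimate is a one-line application of Theorem~\ref{Curvature via the Laplacian}.
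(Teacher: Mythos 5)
Your main case ($|C|\geq 5$ and $q'\neq x$ or $p_-\not\sim x$) is correct and is already a genuinely different route from the paper: the paper never touches the cycle edge $\{p,q\}$, but instead tests the single hub edge $\{x,y\}$, where $y$ is the neighbour of $x$ towards $p$, with the function $f(z)=\min\{d(z,N[x]\setminus\{y\}),d(z,t)-1\}$ ($t$ the neighbour of $x$ towards $q$). That one function gives $\Delta f(x)=0$ and $\Delta f(y)\geq 0$ in \emph{all} configurations, so the paper has no case split at all. Your approach trades that uniformity for a case analysis, and the case analysis is where it breaks.

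The gap is in your case (b). Consider the generalized Halin graph on $\{x,b,\ell_1,v_1,p,q',q,r\}$ with tree edges $xb$, $b\ell_1$, $bv_1$, $v_1p$, $xq'$, $q'q$, $xr$ and cycle $\ell_1\,p\,q\,r$. Here $p\in A_1$, $q\in A_2$ are adjacent on $C$ with $d_T(x,p)+d_T(x,q)=3+2=5$, and $|C|=4$, so this lands in (b) with $A_1$ carrying the branch vertex $b$ adjacent to $x$. Your test function on $\{p,q\}$ gives only $\kappa_{LLY}(p,q)\leq\frac13(1+f(p_-))=\frac13$, since $p_-=\ell_1$ is cycle-adjacent to $q_+=r$ and hence $f(\ell_1)=0$. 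Your fallback then looks for a degree-$2$ edge on the $x$--$b$ path or on a branch below $b$; the only candidates are $\{b,v_1\}$ and $\{v_1,p\}$, and a direct computation over integer $1$-Lipschitz functions shows $\kappa_{LLY}(b,v_1)=\kappa_{LLY}(v_1,p)=\frac13>0$ (the $C_4$ through $b,\ell_1,p,v_1$ saves both edges). So no edge of the type your recipe produces is non-positively curved in this graph. The graph \emph{does} have a non-positively curved edge --- $\{x,b\}$, exactly the edge the paper tests, and also $\{p,q\}$ itself but only via an optimal test function that assigns $f(\ell_1)=-1$ rather than the value $1-d(\ell_1,N[q]\setminus\{p\})=0$ --- but neither is reachable by the procedure you describe. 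To repair the argument you would have to add, at minimum, a treatment of the hub edge $\{x,y\}$ with a test function that assigns values to the off-path neighbours of $x$, at which point you have essentially reconstructed the paper's proof; given that, the whole $\{p,q\}$ analysis and the (a)/(b)/(c) split become unnecessary.
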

\begin{proof}
    Without loss of generality, we assume $d_T(x,p)\geq d_T(x,q)$. Suppose that $d_T(x,p)+d_T(x,q)\geq 5$.  Let $y$ be the neighbor of $x$ in the component $A_i$ and $t$ be the neighbor of $x$ in the component $A_j$.
Consider a function $f:N[x]\cup N[y]\rightarrow\mathbb{Z}$ given by
%$$f(z)= \begin{cases}
%-1, & \text { if } z=t ; \\
%0, & \text { if } z\in N[x]\backslash\{t,y\}; \\
%1, &  \text{ if } z=y;\\
%1, & \text{ if $z\in N(y)\backslash\{x\}$ and %$d(z,N(x)\cup\{p\}\backslash\{t,y\})=1$;}\\
%2, & \text { if $z\in N(y)\backslash\{x\}$ and $d(z,N(x)\cup\{p\}\backslash\{t,y\})>1$.}
%\end{cases}$$
%Observe that the assumption $d_T(x,p)+d_T(x,q)\geq 5$ ensures that $d(z,t)\geq3$ for all $z\in N(y)\backslash\{x\}$. 
$$f(z)=\min\{d(z,N[x]\backslash \{y\}),d(z,t)-1\}.$$
Since the minimum of two 1-Lipschitz functions is still 1-Lipschitz, the function $f$ is 1-Lipschitz.
%Observe that the function $f$ is 1-Lipschitz and $f(y)-f(x)=1$. Notice that there are at most one $z\in N(y)\backslash\{x\}$ satisfying $d(z,N(x)\backslash\{t,y\})=1$ and there are at most one $z'\in N(y)\backslash\{x\}$ satisfying $d(z',p)=1$. If both $z$ and $z'$ exist and $z\neq z'$, then we have $d_y\geq 4$. Hence, we derive by Theorem \ref{Curvature via the Laplacian}
By the definition of $f$, we have $f(y)=1$, $f(t)=-1$ and $f(z)=0$ for any $z\in N[x]\backslash \{y,t\}$.
It follows that $f(y)-f(x)=1$ and $\Delta f(x)=0$.
Let $s$ be the neighbor of $y$ on the unique path in $T$ connecting $y$ and $p$. Since $d_T(x,p)+d_T(x,q) \geq 5$, we have $f(s)=2$. Furthermore, $f(z)\ge 1$ for any $z\in N[y]\backslash \{x\}$. Thus, we derive $\Delta f(y)\ge 0$. It follows by Theorem \ref{Curvature via the Laplacian} that
\[\kappa_{LLY}(x,y)\le\Delta f(x)-\Delta f(y)\leq 0,\]
%\[\kappa_{LLy}(x,y)\leq -\frac{d_y-4}{d_y}\leq 0.\]
%If $z=z'$ exists, then we have $d_y\geq 3$. Therefore, Theorem \ref{Curvature via the Laplacian} leads to 
%\[\kappa_{LLY}(x,y)\leq -\frac{d_y-3}{d_y}\leq 0.\] Otherwise, if both $z$ and $z'$ do not exist, we have $d_y\geq 2$. Hence Theorem \ref{Curvature via the Laplacian} yields
%$$\kappa_{LLY}(x,y)\leq -\frac{d_y-2}{d_y}\leq 0.$$
which leads to a contradiction. Therefore, we have shown $d_T(x,p)+d_T(x,q)\leq 4$.
\end{proof}
\begin{corollary}\label{corollary}
Let $G=H(T,C)$ be a generalized Halin graph with positive Lin-Lu-Yau curvature. Let $x$ be a vertex of $T$ with maximum degree $D(T)$. If $T$ has exactly $D(T)$ leaves and $D(T)\geq 4$, then $d_T(x,y)\leq 2$ for any $y\in C$.
\end{corollary}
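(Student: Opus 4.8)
The plan is to read off the shape of the tree $T$ from the hypothesis and then invoke Lemma~\ref{no C3 C4} on a well-chosen edge at $x$.

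First I would determine the structure of $T$. Put $c:=D(T)$, so that $T-\{x\}$ has exactly $c$ components $A_1,\dots,A_c$. A short argument shows every $A_i$ contains at least one leaf of $T$ (a one-vertex component is itself such a leaf; a component on $\geq 2$ vertices has $\geq 2$ vertices of degree $1$ in itself, and any such vertex other than the neighbour of $x$ is a leaf of $T$). Since $x$ is not a leaf and $T$ has exactly $c$ leaves, each $A_i$ contains exactly one leaf $\ell_i$; a component on $\geq 2$ vertices then has exactly two vertices of degree $1$, namely $\ell_i$ and the neighbour $y_i$ of $x$, so $A_i$ is the path in $T$ from $y_i$ to $\ell_i$ and every vertex of $A_i$ except $\ell_i$ has degree $2$ in $T$. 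In particular $C=\{\ell_1,\dots,\ell_c\}$, and since consecutive outer vertices lie in distinct components, Lemma~\ref{lemma:cycle<6} already gives $d_T(x,\ell_i)\leq 3$ for each $i$; it remains to exclude $d_T(x,\ell_i)=3$.

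Suppose $d_T(x,\ell_i)=3$ for some $i$, and let $w_i$ be the vertex of $A_i$ at $T$-distance $2$ from $x$. Then $y_i$ and $w_i$ lie strictly between $x$ and $\ell_i$ on a path of $T$, hence have degree $2$ in $T$; thus $d_{y_i}=d_{w_i}=2$, neither lies on $C$, and $N(w_i)$ consists precisely of the two $T$-neighbours of $w_i$, which include neither $x$ nor any $y_j$ with $j\neq i$. Since $N(x)=\{y_1,\dots,y_c\}$ and $N(y_i)=\{x,w_i\}$, the edge $\{x,y_i\}$ lies in no triangle (a common neighbour of $x$ and $y_i$ could only be $w_i$, but $w_i\not\sim x$) and in no $4$-cycle (such a cycle would force $y_j\sim w_i$ for some $j\neq i$). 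Applying Lemma~\ref{no C3 C4} to this edge, with $y_i$ playing the role of $x$ and $x$ that of $y$, then yields
\[
\kappa_{LLY}(x,y_i)\;\leq\;\frac{1}{d_{y_i}}+\frac{2}{d_x}-1\;=\;\frac12+\frac{2}{D(T)}-1\;\leq\;0,
\]
using $D(T)\geq 4$, which contradicts the positivity of the Lin--Lu--Yau curvature. Hence $d_T(x,\ell_i)\leq 2$ for every $i$, as required.

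The only step I expect to require care is checking that $\{x,y_i\}$ avoids all triangles and $4$-cycles; this rests on $w_i$ being an \emph{interior} vertex of the path $A_i$, so that it has degree $2$ in $T$ and is therefore untouched by any edge of $C$, and this is exactly why one needs $d_T(x,\ell_i)=3$ rather than merely $\geq 2$. The hypothesis $D(T)\geq 4$ enters only in the final inequality and is sharp for this argument: when $D(T)=3$ it gives only $\kappa_{LLY}(x,y_i)\leq\tfrac16$, so excluding distance $3$ in that case would need a different device, such as a tailored test function via Theorem~\ref{Curvature via the Laplacian}.
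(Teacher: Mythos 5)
Your proof is correct and follows essentially the same route as the paper: both apply Lemma~\ref{no C3 C4} to the edge joining $x$ to its degree-$2$ neighbour on the path toward a leaf at $T$-distance $\geq 3$, obtaining $\frac12+\frac{2}{D(T)}-1\leq 0$. The only difference is cosmetic: the paper certifies that this edge avoids every $C_3$ and $C_4$ by noting it is shared by two $5$-faces (using Lemma~\ref{lemma:cycle<6} to pin down the adjacent outer vertices), whereas you verify it directly from the neighbourhood structure of the path component, which is a perfectly valid and slightly more self-contained check.
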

\begin{proof}
    Suppose that there exists a vertex $y\in C$ such that $d_T(x,y)\geq3$. Due to Lemma \ref{lemma:cycle<6}, $d_T(x,y)=3$ and $d_T(x,y_1)=d_T(x,y_2)=1$, where  $y_1,y_2$ are the neighbors of $y$ on $C$. Denote the neighbor of $x$ which is in the same component with $y$ by $x'$. Then, the edge $\{x,x'\}$ is shared by two 5-faces, which leads to $\kappa_{LLY}(x,x')\leq \frac{1}{2}+\frac{2}{D(T)}-1\leq 0$ by Lemma \ref{no C3 C4}. Hence, $d_T(x,y)\leq 2$ for any $y\in C$.
\end{proof}
\begin{proof}[Proof of Theorem \ref{generalized Halin graph with positive LLY}]
    Let $G=H(T,C)$ be a generalized Halin graph with positive Lin-Lu-Yau curvature generated by the tree $T$ and the cycle $C$. Suppose that $x$ is a vertex with maximum degree $D(T)$ in $T$. 
    According to the maximum degree $D(T)$ of $T$, we divide our proof into the following three cases.
   \begin{case}
    	$D(T)\geq 5$. Notice that $T$ has at least $D(T)$ leaves. We first show that $T$ actually has precisely $D(T)$ leaves. Suppose that $T$ has more than $D(T)$ leaves, then there exists a component $A_i$ of $T-\{x\}$ containing at least two outer vertices. Let $b\ne A_{i,2}$ be the other neighbor of $A_{i,1}$ on the cycle $C$, and $p\ne A_{i,1}$ be the other neighbor of $b$ on the cycle $C$. Consider a function $f_1:N[A_{i,1}]\cup N[b]\rightarrow\mathbb{Z}$ given by
$$f_1(z)= \begin{cases}
-1, & \text { if } z=A_{i,2} ; \\
0, & \text { if } z\in N[A_{i,1}]\backslash\{b,A_{i,2}\} ; \\
1, & \text { if } z\in N[b]\backslash\{A_{i,1},p\} ; \\
2, & \text { if } z=p.
\end{cases}$$
By assumption, $T$ has at least $6$ leaves. This ensures $d(A_{i,2},p)=3$, and hence $f_1\in Lip(1)$. Since $f(b)-f(A_{i,1})=1$, we have by Theorem \ref{Curvature via the Laplacian}
$$\kappa_{LLY}(A_{i,1},b)\leq \Delta f_1(A_{i,1})-\Delta f_1(b)=0,$$
which is a contradiction. Therefore, $T$ has exactly $D(T)$ leaves. 

Applying Corollary \ref{corollary}, we conclude that $d_T(x,y)\leq 2$ for any $y$ on the cycle $C$.

Let $a$ and $b$ be two vertices on $C$ with $d_T(x,a)=d_T(x,b)= 2$. If $a$ and $b$ are adjacent, then $\kappa_{LLY}(a,b)\leq0$ by Lemma \ref{no C3 C4} since the edge $\{a,b\}$ is not in any $C_3$ or $C_4$ when $D(T)\geq 5$.
Therefore, two vertices $a$ and $b$ on $C$ with $d_T(x,a)=d_T(x,b)= 2$ can not be adjacent. We divide our discussion into two subcases as follows.
\begin{subcase}
 There is no vertex $y$ on $C$ with $d_T(x,y)=2$. 
 
 Then $G$ must be a wheel graph. Select any vertex $y$ on $C$. Consider a function $f_2:N[x]\cup N[y]\rightarrow\mathbb{Z}$ given by
$$f_2(z)= \begin{cases}
1, & \text { if } z\in (N(x)\cap N(y))\cup\{y\} ; \\
0, & \text { if } z\in N(t)\backslash\{y\}\ {\rm for\ some}\ t\in N(x)\cap N(y) ; \\
-1, &  \text{ others}.
\end{cases}$$
Observe that $f_2(y)-f_2(x)=1$ and $f_2\in Lip(1)$. By Theorem \ref{Curvature via the Laplacian}, $\kappa_{LLY}(x,y)\leq \frac{8}{D(T)}-\frac{2}{3}$. When $D(T)\geq 12$, $\kappa_{LLY}(x,y)\leq0$. It is direct to check that $W_n(4\leq n\leq12)$ has positive Lin-Lu-Yau curvature by \eqref{eq:bourne}. As an example, we verify that $W_5$ has positive Lin-Lu-Yau curvature in Section \ref{section 4}. One can also check the curvature by the graph curvature calculator \cite{CKLLS22}, which is a freely accessible interactive app at 
\begin{center}
    https://www.mas.ncl.ac.uk/graph-curvature/
\end{center}
\end{subcase}
\begin{subcase}
There exists at least one vertex $p$ on $C$ that satisfies $d_T(x,p)=2$. 

Let $y$ be the neighbor of $x$ in the component corresponding to $p$.
Consider a function $f_3:N[x]\cup N[y]\rightarrow\mathbb{Z}$ given by
$$f_3(z)= \begin{cases}
1, & \text { if } z\in\{y,p\} ; \\
0, & \text { if } z\in (N(p)\cup\{x\})\backslash\{y\}; \\
-1, &  \text{ others}.
\end{cases}$$
Then, we check that $f_3(y)-f_3(x)=1$ and $f_3\in Lip(1)$. By Theorem \ref{Curvature via the Laplacian}, $\kappa_{LLY}(x,y)\leq \frac{4}{D(T)}-\frac{1}{2}$. If $D(T)\geq 8$, then $\kappa_{LLY}(x,y)\leq 0$. Thus, we only need to check the situations when $D(T)=5,6,7$. 

Recall that any two vertices $a$ and $b$ on the cycle $C$ with $d_T(x,a)=d_T(x,b)= 2$ can not be adjacent. The remaining situations are not so much. We list all of them in Figure \ref{graph subcase 1.2} in the appendix and find that only $W_7',W_8',W_9',W_8'',W_9'',W_{10}''$  have positive Lin-Lu-Yau curvature by using the graph curvature calculator.
\end{subcase}
\end{case}

\begin{case}
    $D(T)=3$. According to Lemma \ref{lemma: no double X}, we divide this case into two subcases.
    \begin{subcase}
        Each component of $T-\{x\}$ has only one outer vertex ($|V(C)|=3$).
        
Denote the three outer vertices  by $p_1$, $p_2$ and $p_3$. Consider a disordered triplet:$$(d_T(x,p_1),d_T(x,p_2),d_T(x,p_3)).$$
Due to Lemma \ref{lemma:cycle<6}, we can list all the possible triplets as follows:
$$(3,1,1)\quad(2,2,2)\quad(2,2,1)\quad(2,1,1)\quad(1,1,1).$$
Using the graph curvature calculator, we check that all the graphs corresponding to the above triplets have positive Lin-Lu-Yau curvature. This means that
$G\cong H_2,H_1,W_6'',W_5'$ or $W_4$ in this subcase.
    \end{subcase}
    \begin{subcase}\label{subcase 2.2}
        There is exactly  one component of $T-\{x\}$ having at least two outer vertices.
        
   We denote this component by $A_1$. If $A_1$ has at least three outer vertices, then there must exist an edge $\{p,q\}$ on $C$ with $d_T(x,p)=3$ and $d_T(x,q)=1$ such that $p\in A_1$ and $q$ is in another component by Lemma \ref{lemma:cycle<6}. Clearly, $\{p,q\}$ is not in any $C_3$ or $C_4$. We have $\kappa_{LLY}(p,q)\leq \frac{1}{3}+\frac{2}{3}-1=0$ by Lemma \ref{no C3 C4}. Therefore, $A_1$ has exactly two outer vertices. 

 Combining with Lemma \ref{lemma:cycle<6}, the remaining graphs are shown in Figure \ref{graph subcase 2.2} in the appendix, and it tells us $G\cong H_3, H_4$ or $H_5$ in this subcase.
 %Choose two adjacent vertices $s$ and $t$ on $C$ in different components satisfying $d_T(x,s)\geq d_T(x,t)$. It is easy to check the remaining situations by Lemma \ref{lemma:cycle<6} and graph curvature calculator.If $d_T(x,s)=3$ and $d_T(x,t)=1$, then $s\in T_1$. Hence $G\cong H_4$ or $H_5$.If $d_T(x,s)=d_T(x,t)=2$, then $G\cong H_4$.If $d_T(x,s)=2$ and $d_T(x,t)=1$, then $G\cong H_3$ or $H_4$.
    \end{subcase}
 \end{case}	
 \begin{case}
      $D(T)=4$.  We again divide this case into two subcases.
    \begin{subcase}
         $|V(C)|=4$.

Denote the cycle by $C=q_1q_2q_3q_4$. %$4$ outer vertices  by $q_1$, $q_2$, $q_3$ and $q_4$, where $q_2$ is not adjacent to $q_4$ and $q_1$ is not adjacent to $q_3$.
 Consider an ordered quadruple:
$$(d_T(x,q_1),d_T(x,q_2),d_T(x,q_3),d_T(x,q_4)).$$
Due to Corollary \ref{corollary}, we know that $d_T(x,q_i)\leq 2$.
Combining with Lemma \ref{lemma:cycle<6}, we  list all the possible quadruples as follows (we identify those ordered quadruples correponding to isomorphic graphs):
$$(2,2,2,2)\quad(2,2,2,1)\quad(2,2,1,1)\quad(2,1,2,1)\quad(2,1,1,1)\quad(1,1,1,1).$$
Actually, both of $(2,2,2,2)$ and $(2,2,2,1)$ lead to at least one edge $\{x,x'\}$ shared by two 5-faces. By Lemma \ref{no C3 C4}, $\kappa_{LLY}(x,x')\leq 0$. We directly verify  that all the remaining four situations have positive Lin-Lu-Yau curvature by the graph curvature calculator. Therefore, $G\cong W_5,W_6',W_7''$ or $H_6$ in this subcase.
    \end{subcase}
    \begin{subcase}
        $|V(C)|\geq 5$.

        Choose a component of $T-\{x\}$ with at least two outer vertices and denote it by $A_1$. First, we prove that the neighbor $y\in A_1$ of $x$ is of degree $3$. Suppose not, then either $d_y=2$ or $d_y=4$.

        If $d_y=2$, then $\{x,y\}$ must be shared by two 5-faces due to Lemma \ref{lemma:cycle<6}. In particular, $\{x,y\}$ is not in any $C_3$ or $C_4$. By Lemma \ref{no C3 C4}, we have $\kappa_{LLY}(x,y)\leq 0$, a contradiction.

        If $d_y=4$, then we consider a 1-Lipschitz function $f_4:N[x]\cup N[y]\rightarrow\mathbb{Z}$ given by
$$f_4(z)= \begin{cases}
-1, & \text { if } z=x_2 ; \\
0, & \text { if } z\in \{x,x_1,x_3\}; \\
1, &  \text{ if } z\in\{y,y_1,y_3\};\\
2, & \text { if } z=y_2.
\end{cases}$$
Here, $y,x_1,x_2,x_3$ are the neighbors of $x$ in clockwise order and $x,y_1,y_2,y_3$ are the neighbors of $y$ in clockwise order, according to the planar embedding of $T$.  Observe that $d(x_2,y_2)=3$. Hence, $f_4\in Lip(1)$. Noticing that $f_4(y)-f_4(x)=1$, we have  $\kappa_{LLY}(x,y)\leq 0$ by Theorem \ref{Curvature via the Laplacian}. This proves $d_y=3$. 

A similar argument as in Subcase \ref{subcase 2.2} tells us that $A_1$ has exactly two outer vertices. Let us again denote the neighbors of $x$ in clockwise order by $y, x_1, x_2$ and $x_3$ and denote the neighbors of $y$ in clockwise order by $x, y_1$ and $y_2$. Lemma \ref{lemma: no double X} tells us that the component containing $x_1$ or $x_3$ has precisely one outer vertex. If the component corresponding to $x_2$ has two outer vertices, then either there exists an edge $\{s_1,s_2\}$ on $C$ which is not in any $C_3$ or $C_4$ or $G\cong H'$, which is depicted in Figure \ref{H'}. In the first case, we have  $\kappa_{LLY}(s_1,s_2)\leq 0$ due to Lemma \ref{no C3 C4}. In the later case, the graph $H'$ has edges of $0$ curvature as shown in Figure \ref{H'}.

\begin{figure}[ht]
   \centering
\includegraphics[width=0.4\textwidth]{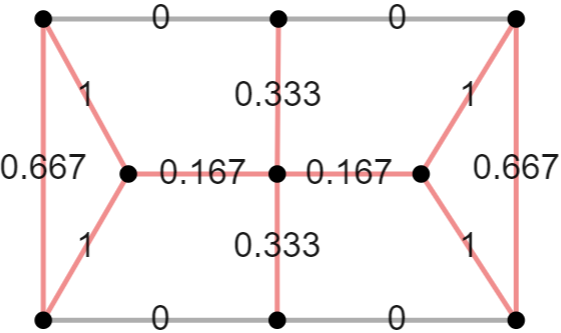}
   \caption{Graph $H'$}
   \label{H'}
\end{figure}

Thus, $A_1$ is the only component which has at least two outer vertices. To avoid the appearance of an edge $\{s_1,s_2\}$ on $C$ not containing in any $C_3$ or $C_4$, the vertices $x_1$, $x_3$, $y_1$ and $y_2$ must be the outer vertices. Moreover, $d_T(x,p)\leq 2$ where $p$ is the outer vertex in the component containing $x_2$. Otherwise, $\{x,x_2\}$ is shared by two big faces (of size at least 5), which leads to $\kappa_{LLY}(x,x_2)\leq\frac{1}{2}+\frac{2}{4}-1=0$ by Lemma \ref{no C3 C4}.
There are only two remaining graphs with $d_T(x,p)=1$ or $2$, respectively, which are $H_7$ and $H_8$ in Figure \ref{H_i}. We check that both of them have positive Lin-Lu-Yau curvature by the graph curvature calculator. Therefore, we have $G\cong H_7$ or $H_8$ in this subcase.
    \end{subcase}
    \end{case}	
This completes our proof.
\end{proof}

\section{An example}\label{section 4}
We only calculate the upper bounds of Lin-Lu-Yau curvature in our proof, and utilize the graph curvature calculator to verify whether a graph has positive Lin-Lu-Yau curvature or not for convenience. Below we take $W_5$ as an example to show how to  establish lower bounds for the Lin-Lu-Yau curvature via \eqref{eq:bourne}.

Denote the center vertex of $W_5$ by $x$ and the cycle by $C=x_1x_2x_3x_4$. By symmetry, we only need to consider $\kappa_{LLY}(x,x_1)$ and $\kappa_{LLY}(x_1,x_2)$. For $\frac{1}{4}\leq \alpha\leq 1$, consider a map $\pi_1:V\times V\rightarrow [0,1]$ defined as 
$$\pi_1(u,v)= \begin{cases}
\min\{m_x^\alpha(u),m_{x_1}^\alpha(u) \}, & \text { if } u=v ; \\
\alpha -\frac{1-\alpha}{3}, & \text { if } u=x,v=x_1 ; \\
\frac{1-\alpha}{3}-\frac{1-\alpha}{4}, & \text { if } u=x_3,v\in \{x_1,x_2,x_4\} ; \\
0, & \text { otherwise.}
\end{cases}$$
By (\ref{eq:bourne}), we derive
$$\kappa_{LLY}(x,x_1)=\frac{\kappa_\alpha(x,x_1)}{1-\alpha}\geq\frac{1-\sum\limits_{v\in V}\sum\limits_{u\in V} \pi_1(u,v)d(u,v)}{1-\alpha}=1.$$
Similarly, for $\frac{1}{4}\leq \alpha\leq 1$, consider another map $\pi_2:V\times V\rightarrow [0,1]$ defined as 
$$\pi_2(u,v)= \begin{cases}
\min\{m_{x_1}^\alpha(u),m_{x_2}^\alpha(u) \}, & \text { if } u=v ; \\
\alpha -\frac{1-\alpha}{3}, & \text { if } u=x_1,v=x_2 ; \\
\frac{1-\alpha}{3}, & \text { if } u=x_4,v=x_3 ; \\
0, & \text { otherwise.}
\end{cases}$$
By (\ref{eq:bourne}), we deduce
$$\kappa_{LLY}(x_1,x_2)=\frac{\kappa_\alpha(x_1,x_2)}{1-\alpha}\geq\frac{1-\sum\limits_{v\in V}\sum\limits_{u\in V} \pi_2(u,v)d(u,v)}{1-\alpha}=1.$$

Therefore, $W_5$ has positive Lin-Lu-Yau curvature.
\section*{Acknowledgement}
\noindent 
This work is supported by the National Key R \& D Program of China 2023YFA1010200. K. C. is supported by the New Lotus Scholars Program PB22000259. H. L. is supported by the National Natural Science Foundation of China (No. 12271162), Natural Science Foundation of Shanghai (No. 22ZR1416300 and No. 23JC1401500) and The Program for Professor of Special Appointment (Eastern Scholar) at Shanghai Institutions of Higher Learning (No. TP2022031). S. L. is supported by the National Natural Science Foundation of China No. 12031017 and No. 12431004.

\appendix
\section{Appendix}
We use the graph curvature calculator to draw the remaining graphs in each subcases of our proof. The number on each edge is the Lin-Lu-Yau curvature.
\begin{figure}[ht]
  \centering
 \includegraphics[width=0.8\textwidth]{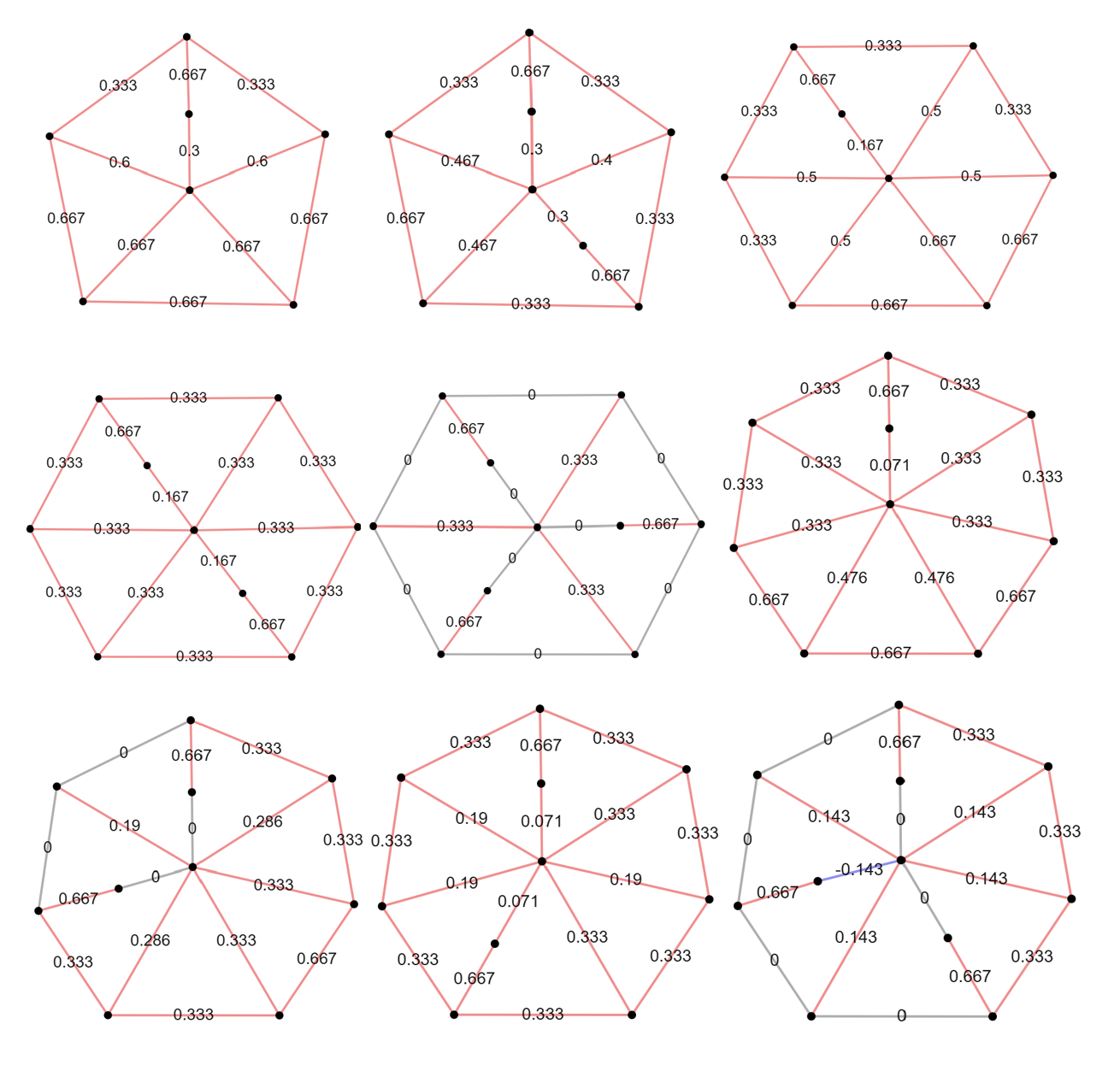}
   \caption{Remaining graphs in Subcase 1.2}
   \label{graph subcase 1.2}
\end{figure}

\begin{figure}[ht]
  \centering
 \includegraphics[width=0.8\textwidth]{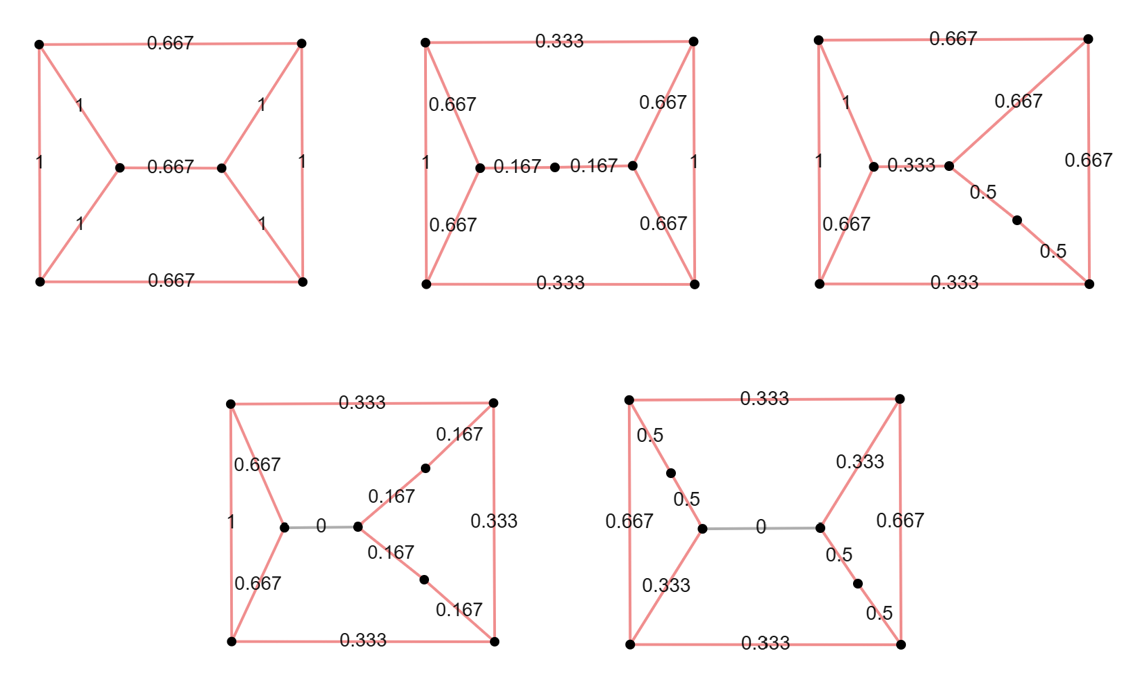}
   \caption{Remaining graphs in Subcase 2.2}
   \label{graph subcase 2.2}
\end{figure}

\end{document}